\documentclass[10pt]{article}

\usepackage[margin=1.12in]{geometry}
\usepackage[T1]{fontenc}
\usepackage[utf8]{inputenc}
\usepackage{amsmath,amssymb,amsthm,mathtools}
\usepackage{authblk}
\usepackage{enumitem}
\usepackage{aliascnt}
\usepackage[hyperfootnotes=false]{hyperref}
\usepackage[nameinlink,noabbrev]{cleveref}
\usepackage{microtype}

\hypersetup{colorlinks=true,linkcolor=blue,citecolor=blue,urlcolor=blue}

\setlist[enumerate]{leftmargin=2.1em,itemsep=0.15em,topsep=0.25em}
\allowdisplaybreaks[2]

\theoremstyle{plain}
\newtheorem{theorem}{Theorem}[section]
\newaliascnt{lemma}{theorem}
\newtheorem{lemma}[lemma]{Lemma}
\aliascntresetthe{lemma}
\newaliascnt{proposition}{theorem}
\newtheorem{proposition}[proposition]{Proposition}
\aliascntresetthe{proposition}
\newaliascnt{corollary}{theorem}

\aliascntresetthe{corollary}
\newaliascnt{conjecture}{theorem}

\aliascntresetthe{conjecture}
\newaliascnt{question}{theorem}

\aliascntresetthe{question}
\newaliascnt{claim}{theorem}
\newtheorem{claim}[claim]{Claim}
\aliascntresetthe{claim}
\newaliascnt{definition}{theorem}

\aliascntresetthe{definition}
\theoremstyle{remark}
\newaliascnt{remark}{theorem}

\aliascntresetthe{remark}
\newtheorem*{remark*}{Remark}

\crefname{theorem}{Theorem}{Theorems}
\crefname{lemma}{Lemma}{Lemmas}
\crefname{proposition}{Proposition}{Propositions}
\crefname{corollary}{Corollary}{Corollaries}
\crefname{conjecture}{Conjecture}{Conjectures}
\crefname{question}{Question}{Questions}
\crefname{claim}{Claim}{Claims}
\crefname{definition}{Definition}{Definitions}
\crefname{remark}{Remark}{Remarks}
\crefname{section}{Section}{Sections}
\crefname{subsection}{Subsection}{Subsections}
\crefname{equation}{Equation}{Equations}
\crefalias{enumi}{proposition}

\newcommand{\ex}{\operatorname{ex}}
\newcommand{\codeg}{\operatorname{codeg}}

\newcommand{\tr}{t}

\title{\Large \bf On Mubayi's Polynomial-Ideal Conjecture and a Hypergraph Tur\'{a}n Theorem}
\author[1,2]{Heng~Li\thanks{Email:\texttt{heng.li@sdu.edu.cn}}}
\author[3]{Xizhi~Liu\thanks{Email:\texttt{liuxizhi@ustc.edu.cn}}}
\affil[1]{\small School of Mathematics, Shandong University, Jinan, China}
\affil[2]{\small Extremal Combinatorics and Probability Group, Institute for Basic Science, Daejeon, South Korea}
\affil[3]{\small School of Mathematical Sciences, University of Science and Technology of China, Hefei, China}
\date{\today}

\begin{document}
\maketitle

\begin{abstract}
Among the many proofs of Tur\'{a}n's classical theorem, one particularly surprising proof due to Li and Li uses ideals in polynomial rings to record missing edges. Motivated by their proof, Mubayi proposed a hypergraph analogue, conjecturing that an ideal generated by multipartite $3$-graphs coincides with a differentiated diagonal-vanishing ideal. If true, this conjecture would imply the extremal-number part of Mubayi's classical hypergraph extension of Tur\'{a}n's theorem. We disprove this conjecture throughout the nontrivial parameter range. We then give an alternative algebraic proof of Mubayi's extremal formula for the family $\mathcal{K}_{\ell}^{(r)}$ of clique expansions, using monomial cover ideals and a Hilbert-function symmetrization theorem for square-zero quadratic monomial quotients.
\end{abstract}

\noindent\textbf{2020 Mathematics Subject Classification.} 05C35, 05C65, 05E40, 13F55.

\noindent\textbf{Keywords.} Tur\'{a}n problems; hypergraph expansions; cover ideals; Hilbert functions.

\section{Introduction}

Given a family $\mathcal F$ of $r$-uniform hypergraphs (\emph{$r$-graphs} for short) and an $r$-graph $\mathcal{G}$, we say that $\mathcal{G}$ is \emph{$\mathcal F$-free} if it contains no member of $\mathcal F$ as a subhypergraph. The \emph{extremal number} $\ex(n,\mathcal F)$ is the maximum value of $|\mathcal{G}|$ over all $\mathcal F$-free $r$-graphs $\mathcal{G}$ on $n$ vertices. In the special case that $\mathcal F=\{F\}$, we write $\ex(n,F)$.

The case $r=2$ is the classical starting point. Tur\'{a}n's theorem determines $\ex(n,K_{q+1})$ exactly~\cite{Turan}, and the Erd\H{o}s--Stone theorem gives the asymptotic answer for every non-bipartite forbidden graph~\cite{ErdosStone}. Variants in which one maximizes the number of copies of a fixed graph also go back to Zykov's symmetrization and clique-counting theorem~\cite{Zykov}; this viewpoint is now part of the generalized Tur\'{a}n problem studied systematically by Alon and Shikhelman~\cite{AlonShikhelman}. For $r\ge 3$, the picture is much less settled. Even the natural analogues for complete hypergraphs are difficult, and the exact results that are known typically require additional structure. For general background on hypergraph Tur\'{a}n theory, we refer to the surveys of Sidorenko~\cite{SidorenkoSurvey} and Keevash~\cite{KeevashSurvey}.

Graph expansions provide one important source of such structured hypergraphs. Given a graph $F$, its $r$-uniform expansion is obtained by enlarging each edge of $F$ with new vertices, using disjoint new vertex sets for distinct edges. The expansion of a clique is a particularly natural hypergraph substitute for a complete graph. Mubayi considered the corresponding weak expansions of $K_\ell$: one chooses an $\ell$-vertex core and, for each pair of core vertices, an $r$-edge containing that pair. As a way to extend Tur\'{a}n's theorem to hypergraphs, Mubayi determined the Tur\'{a}n number for $\mathcal{K}_{\ell}^{(r)}$, the family of all weak expansions of $K_\ell$; in particular, his result provided the first infinite sequence of Tur\'{a}n densities for every $r\ge 3$~\cite{Mubayi}. Pikhurko later obtained the exact Tur\'{a}n number for the ordinary $r$-uniform expansions of complete graphs for all sufficiently large $n$~\cite{Pikhurko}. Stability and exact results for related expansions and extensions have since been developed in several directions; see, for instance, Liu~\cite{LiuStability}, Brandt--Irwin--Jiang~\cite{BrandtIrwinJiang}, Norin--Yepremyan~\cite{NorinYepremyan}, and Liu--Mubayi--Reiher~\cite{LiuMubayiReiher}. More broadly, Tur\'{a}n problems for expansions have become an active topic; see the survey of Mubayi and Verstra\"{e}te~\cite{MubayiVerstraeteSurvey} and the work of Kostochka, Mubayi, and Verstra\"{e}te on expansions and shadows~\cite{KostochkaMubayiVerstraete}.

This paper starts from a different but related motivation: an algebraic proof of Tur\'{a}n's theorem due to Li and Li~\cite{LiLi}. Recall that an \emph{ideal} $I$ of a polynomial ring $R$ is a collection of polynomials closed under addition and under multiplication by arbitrary elements of $R$. If $f_1,\ldots,f_m\in R$, then the ideal generated by them is
\[
\langle f_1,\ldots,f_m\rangle
=
\Big\{\sum_{i\in[m]}h_i f_i\colon h_i\in R\text{ for every }i\Big\}.
\]
Let $\mathbb{K}$ be a field of characteristic zero, and set $\mathrm{S}_n\coloneqq\mathbb{K}[x_1,\ldots,x_n]$. For $s\ge 2$ and $L\in\binom{[n]}s$, write $\Delta_L$ for the \emph{diagonal} $x_i=x_j$ for all $i,j\in L$, and set
\[
\mathrm{I}(n,s)\coloneqq \left\{ f\in \mathrm{S}_n\colon f|_{\Delta_L}=0\text{ for every }L\in\tbinom{[n]}s \right\}.
\]
For a graph $G$ on vertex set $[n]$, define
\[
p_G(x_1,\ldots,x_n)\coloneqq
\prod_{\substack{i<j\\ \{i,j\}\notin G}}(x_i-x_j).
\]
An $s$-set $L$ spans a clique in $G$ if and only if no factor $x_i-x_j$ with $i,j\in L$ occurs in $p_G$, and hence if and only if $p_G|_{\Delta_L}\not\equiv 0$. Thus $G$ is $K_s$-free if and only if $p_G\in \mathrm{I}(n,s)$.
Since $\deg p_G=\binom n2-|G|$, lower bounds on the degrees of nonzero elements of $\mathrm{I}(n,s)$ translate into upper bounds on $|G|$, giving the Tur\'{a}n bound.
For a combinatorial reader, the important point is that membership in $\mathrm{I}(n,s)$ packages all $s$-vertex clique constraints into one algebraic condition, while the degree of $p_G$ is exactly the number of missing edges.
Thus the algebra is used to prove that any polynomial satisfying the diagonal-vanishing conditions must have sufficiently large degree, which is the same as saying that $G$ must have sufficiently many missing edges.

Mubayi observed in~\cite{Mubayi} that this proof does not immediately extend to hypergraphs. The difficulty is already visible for 3-graphs: the relevant missing object is not a single missing pair, but a whole missing star through a pair. To capture this codegree-zero condition, Mubayi proposed a differentiated diagonal-vanishing ideal. For a 3-graph $\mathcal{G}$ on $[n]$, set 
\[
p_{\mathcal{G}}(x_1,\ldots,x_n)\coloneqq
\prod_{\substack{i<j<k\\ \{i,j,k\}\notin \mathcal{G}}}
(x_i-x_j)(x_i-x_k)(x_j-x_k).
\]
Mubayi's \emph{differentiated diagonal-vanishing ideal} is
\[
\mathrm{DI}(n,\ell)\coloneqq
\left\{
f\in \mathrm{S}_n\colon
\tfrac{\partial^j f}{\partial x_i^j}\in \mathrm{I}(n,\ell)
\text{ for every }i\in[n]\text{ and every }0\le j\le n-3
\right\}.
\]
On the other side, let
\[
\widehat{\mathrm{P}}^{(3)}(n,\ell)\coloneqq
\bigl\langle p_{\mathcal{G}}\colon
\mathcal{G}\text{ is an }(\ell-1)\text{-partite }3\text{-graph on }[n]\bigr\rangle
\subseteq \mathrm{S}_n.
\]
In \cite[Conjecture 6]{Mubayi}, Mubayi conjectured that $\widehat{\mathrm{P}}^{(3)}(n,\ell)=\mathrm{DI}(n,\ell)$. If true, this would imply the hypergraph Tur\'{a}n upper bound for the family $\mathcal{K}_{\ell}^{(3)}$ introduced below.

With this notation, the failure of the proposed equality has the following simple form.

\begin{theorem}\label{thm:intro-counterexample}
For every $\ell\ge 3$ and every $n\ge \ell$, we have $\widehat{\mathrm{P}}^{(3)}(n,\ell)\subsetneq \mathrm{DI}(n,\ell)$.
\end{theorem}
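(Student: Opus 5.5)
The plan is to prove the inclusion $\widehat{\mathrm{P}}^{(3)}(n,\ell)\subseteq \mathrm{DI}(n,\ell)$ directly, and then to show strictness by a degree argument. The witness will be a ``pair-type'' polynomial of strictly smaller degree than every generator $p_{\mathcal G}$.

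\emph{Step 1: $\mathrm{DI}(n,\ell)$ is an ideal.} I will use the Leibniz rule
\[
\tfrac{\partial^j (hf)}{\partial x_i^j}=\sum_{t\le j}\tbinom jt\,\tfrac{\partial^{j-t}h}{\partial x_i^{j-t}}\,\tfrac{\partial^t f}{\partial x_i^t}.
\]
If $f\in\mathrm{DI}(n,\ell)$, every term on the right lies in the ideal $\mathrm{I}(n,\ell)$, so $hf\in\mathrm{DI}(n,\ell)$. Closure under addition is clear. Hence it suffices to show that every generator $p_{\mathcal G}$ lies in $\mathrm{DI}(n,\ell)$.

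\emph{Step 2: generators lie in $\mathrm{DI}$.} Let $\mathcal G$ be $(\ell-1)$-partite and let $L\in\binom{[n]}{\ell}$. By pigeonhole, $L$ contains two vertices $a,b$ in the same part. Then no triple $\{a,b,k\}$ is an edge, so $(x_a-x_b)^{n-2}$ divides $p_{\mathcal G}$. The same reasoning will be reused for the witness, so I state it generally. Suppose $q=(x_a-x_b)^{n-2}r$ with $r$ a polynomial. Differentiating $j\le n-3$ times in $x_i$ and expanding by Leibniz, each term still contains a factor $(x_a-x_b)^{n-2-t}$ with $n-2-t\ge 1$. (If $i\notin\{a,b\}$, the factor is untouched.) Thus every such derivative vanishes on $\Delta_L$. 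It follows that $p_{\mathcal G}\in\mathrm{DI}(n,\ell)$, and therefore $\widehat{\mathrm P}^{(3)}(n,\ell)\subseteq\mathrm{DI}(n,\ell)$.

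\emph{Step 3: degree lower bound on $\widehat{\mathrm P}^{(3)}(n,\ell)$.} Each $p_{\mathcal G}$ is homogeneous of degree $3\bigl(\binom n3-|\mathcal G|\bigr)$. Any nonzero combination $\sum h_{\mathcal G}p_{\mathcal G}$ has its lowest-degree homogeneous component of degree at least $D\coloneqq\min_{\mathcal G}\deg p_{\mathcal G}$. This minimum is attained by the complete balanced $(\ell-1)$-partite $3$-graph with parts $V_1,\dots,V_{\ell-1}$, since enlarging $\mathcal G$ only lowers the degree and the complete partite graph with maximum edge count is balanced (shifting a vertex from a larger to a smaller part increases the edge count). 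For this partition, let
\begin{itemize}
\item $T_2$ be the number of triples with exactly two vertices in one part, and
\item $T_3$ be the number of triples inside a single part.
\end{itemize}
Then $D=3(T_2+T_3)$.

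\emph{Step 4: the witness.} Define
\[
f\coloneqq\prod_{\{a,b\}\subseteq V_s\text{ for some }s}(x_a-x_b)^{n-2}.
\]
This is nonzero, and by Step 2 it lies in $\mathrm{DI}(n,\ell)$, since every $\ell$-set contains a same-part pair $\{a,b\}$ and $(x_a-x_b)^{n-2}$ divides $f$. Its degree is $(n-2)$ times the number of same-part pairs. Double counting triples through same-part pairs gives $\deg f=T_2+3T_3$: a triple with exactly two vertices in one part contains one same-part pair, and a triple inside one part contains three. Hence
\[
D-\deg f=2T_2.
\]
The only point needing care, and what I expect to be the one real check, is that $T_2>0$ for all $n\ge\ell\ge 3$. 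Since $n\ge\ell>\ell-1$, some part has at least two vertices. Since $\ell-1\ge2$ and the partition is balanced, there is at least one other nonempty part, giving a triple of type $T_2$. Thus $0<\deg f<D$, so $f\notin\widehat{\mathrm P}^{(3)}(n,\ell)$, and the inclusion is strict.
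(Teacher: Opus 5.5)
Your proof is correct. The containment half (Steps 1--2) is the paper's Claim, except that you verify via the Leibniz rule that $\mathrm{DI}(n,\ell)$ is an ideal, which the paper only asserts. Your witness, however, is genuinely different from the paper's.

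\textbf{The paper's witness.} The paper partitions $[n]$ into only $\ell-2$ parts and takes the plain product $\prod_s\prod_{a<b\in V_s}(x_a-x_b)$. Membership in $\mathrm{DI}$ then comes from a sharper pigeonhole: already the $\ell-1$ vertices of $L\setminus\{i\}$ contain a same-part pair, and that factor is untouched by $\partial/\partial x_i$. The degree comparison needs an analytic estimate valid only for $\ell\ge 4$. The cases $\ell=3,\ n\ge 4$ (full Vandermonde) and $(\ell,n)=(3,3)$ are handled separately.

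\textbf{Your witness.} You reuse the balanced $(\ell-1)$-partition and the exponent-$(n-2)$ mechanism of Step 2. So $f$ is exactly the same-part-pair portion of the extremal generator $p_{T_3(n,\ell-1)}$. Indeed, up to sign, $f$ properly divides it, with cofactor $\prod(x_a-x_b)^{|V_s|+|V_t|-2}$ over cross pairs $a\in V_s$, $b\in V_t$. The exact double count $D-\deg f=2T_2>0$ then settles every $n\ge\ell\ge 3$ at once, with no inequalities. At $(\ell,n)=(3,3)$ it reduces to the paper's $x_1-x_2$.

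\textbf{What each route buys.} The paper's witness has degree $O(n^2)$, while every generator has degree $\Theta(n^3)$, so it separates the initial degrees by an order of magnitude. Your $\deg f$ is itself cubic in $n$, a constant fraction of $D$. Your witness instead exposes a different defect of $\mathrm{DI}$: it keeps full-star exponents on same-part pairs, but $\mathrm{DI}$ does not register that each missing triple contributes all three of its pair factors.

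\textbf{A wording slip.} In Step 3, shifting a vertex from a larger to a smaller part need not \emph{increase} the edge count. For example, when $\ell-1=2$ every such $3$-graph is empty. It ``does not decrease'' the edge count, which is all your argument uses.
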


When $n<\ell$, both ideals are the whole polynomial ring $\mathrm{S}_n$. Thus the strict containment in \cref{thm:intro-counterexample} holds precisely in the nontrivial range.

The counterexamples are elementary Vandermonde-type products, but they expose a structural obstruction. The differentiated diagonal conditions can be satisfied by different pair factors on different diagonals; they do not force a fixed pair to have its entire missing star. Thus $\mathrm{DI}(n,\ell)$ loses exactly the combinatorial information needed for the intended hypergraph application.

The second part of the paper replaces this diagonal-vanishing language with a monomial model that records missing edges directly. We work in an \emph{edge-variable ring}, with one variable for each possible hyperedge. Forbidden copies give a \emph{Stanley--Reisner ideal}, and its Alexander dual is the corresponding \emph{cover ideal}. This construction is standard in combinatorial commutative algebra~\cite{Stanley,Reisner,EagonReiner,MillerSturmfels,HerzogHibi}; in the present setting, it turns hitting all forbidden copies into a divisibility condition on missing-edge monomials.

For ordinary extremal problems, this translation is exact $\alpha(C_{\mathcal F,n})=\binom nr-\ex(n,\mathcal F)$, 
where $\alpha$ denotes initial degree. For generalized Tur\'{a}n numbers in the sense of Alon and Shikhelman~\cite{AlonShikhelman}, one forbids a graph $F$ but maximizes copies of another graph $T$. In that setting, ordinary degree is replaced by a quotient-theoretic rank on the vector space spanned by the $T$-copy monomials, giving $\ex(n,T,F)=|\mathcal T_{T,n}|-\alpha_T(C_{\mathcal F,n})$. 
Here $\alpha_T$ measures the minimum dimension of the part of the $T$-copy space killed by a missing-edge quotient; when $T=K_2$, it reduces to the usual initial degree.

Finally, we apply this framework to Mubayi's clique-expansion family $\mathcal{K}_{\ell}^{(r)}$. For this family the cover ideal collapses to a missing star ideal. We compute its initial degree by translating its monomials into square-zero quadratic monomial quotients and applying a Hilbert-function symmetrization theorem, an algebraic version of the Zykov symmetrization. This gives a monomial-ideal proof of Mubayi's extremal formula $\ex(n,\mathcal{K}_{\ell}^{(r)})=\tr_r(n,\ell-1)$.

\paragraph*{Organization of the paper.}
\cref{sec:prelim} collects the notation and algebraic background used throughout the paper. \cref{sec:counterexamples} proves the strict containment in \cref{thm:intro-counterexample}. \cref{sec:cover-ideal-method} develops the cover-ideal method: \cref{sec:hilbert-symmetrization} proves the Hilbert-function symmetrization theorem for square-zero quadratic monomial quotients, \cref{sec:cover-ideals} translates ordinary and generalized Tur\'{a}n problems into cover-ideal language, and \cref{sec:star-ideal} computes the missing star ideal and derives Mubayi's extremal formula.

\section{Preliminaries}\label{sec:prelim}

Throughout, $[n]\coloneqq\{1,\ldots,n\}$. We identify a hypergraph with its edge set, and hence write $|H|$ for the number of edges of $H$. If $\mathcal{G}$ is a hypergraph, then $V(\mathcal{G})$ denotes its vertex set. Given an $r$-graph $\mathcal{G}$ and vertices $x,y\in V(\mathcal{G})$, the \emph{codegree} $\codeg_{\mathcal{G}}(x,y)$ is the number of edges of $\mathcal{G}$ containing both $x$ and $y$.

For positive integers $n,q,r$, let $T_r(n,q)$ denote the \emph{complete balanced $q$-partite $r$-graph} on $n$ vertices. Thus the vertex set is partitioned into $q$ classes, no two class sizes differ by more than one, and the edges are all $r$-sets meeting each class in at most one vertex. Let $\tr_r(n,q)\coloneqq|T_r(n,q)|$.
Equivalently, if the balanced class sizes are $n_1,\ldots,n_q$, then $\tr_r(n,q)\coloneqq\sum_{S\in\binom{[q]}r}\prod_{i\in S}n_i$, with the convention that $\tr_r(n,q)\coloneqq 0$ when $r>q$.

All polynomial rings below are over the same field $\mathbb{K}$. The characteristic-zero assumption is only needed for the differentiated diagonal-vanishing ideals; the monomial cover-ideal arguments are characteristic-free.

We use the \emph{standard grading} on every polynomial ring appearing below. Thus if $R=\mathbb{K}[z_1,\ldots,z_N]$, then
\[
R_d\coloneqq
\operatorname{span}_{\mathbb{K}}\left\{z_1^{a_1}\cdots z_N^{a_N}\colon a_1+\cdots+a_N=d\right\}
\]
is the $\mathbb{K}$-vector space of homogeneous polynomials of degree $d$.

An ideal $J$ is \emph{homogeneous} if it is generated by homogeneous polynomials, and it is a \emph{monomial ideal} if it is generated by monomials. A monomial is \emph{squarefree} if no variable appears with exponent larger than one. For a homogeneous ideal $J$, each polynomial in $J$ decomposes into homogeneous pieces which still belong to $J$. Hence the quotient $R/J$ inherits the grading of $R$: its \emph{degree-$d$ piece} is $(R/J)_d\coloneqq R_d/(J\cap R_d)$.
If $A=R/J$, we also write $A_d$ for this vector space; in particular, $A_{q+1}=0$ means that every homogeneous element of degree $q+1$ is zero in the quotient. When $J$ is a monomial ideal, the residue classes of the monomials not belonging to $J$ form a $\mathbb{K}$-basis of $R/J$; we call them \emph{standard monomials}.
Thus, for the monomial quotients used below, quotienting has a simple combinatorial meaning: the monomials in the ideal are declared to be zero, and the standard monomials are the monomials that survive.
When the variables are indexed by edges or vertices, these surviving monomials are exactly the subsets that avoid the forbidden products imposed by the ideal.

For a nonzero homogeneous ideal $J$ in a graded polynomial ring, define its \emph{initial degree} by $\alpha(J)\coloneqq\min\left\{d\colon J_d\ne 0\right\}$.
For a monomial ideal, this is the minimum degree of a monomial in $J$.

We also recall the squarefree monomial duality used later. If $R=\mathbb{K}[y_1,\ldots,y_N]$ and a squarefree monomial is written as $y_A\coloneqq\prod_{i\in A}y_i$, then the \emph{Alexander dual} of a squarefree monomial ideal $I=\langle y_A\colon A\in\mathcal A\rangle$ is
\[
I^\vee\coloneqq\bigcap_{A\in\mathcal A}\langle y_i\colon i\in A\rangle.
\]
In this paper, these dual ideals are the \emph{cover ideals}: a monomial lies in $I^\vee$ exactly when its support meets every set $A\in\mathcal A$.
Equivalently, if the sets $A$ are viewed as the edges of an auxiliary hypergraph on the variables, then $I^\vee$ is the monomial vertex-cover ideal of that auxiliary hypergraph.
Its squarefree monomials are precisely the sets of variables that hit every one of the sets $A$.

We next recall the clique-expansion family that appears in Mubayi's extremal formula.

Fix $\ell,r\ge 2$. Let $\mathcal{K}_{\ell}^{(r)}$ be the family of all simple $r$-graphs with at most $\binom{\ell}{2}$ edges which contain a set $S$ of $\ell$ vertices, called the \emph{core}, such that every pair of vertices in $S$ is contained in at least one edge.
Equivalently, one may choose an $r$-edge for each pair of core vertices, allowing the same edge to serve several pairs, and then take the resulting simple edge set.

When $r=2$, this family consists of the complete graph $K_\ell$. For $r>2$, the same definition encodes a positive codegree condition on all pairs in the core.

\section{Counterexamples to the polynomial-ideal conjecture}\label{sec:counterexamples}

We now prove \cref{thm:intro-counterexample}. The first step is the easy containment: generators coming from $(\ell-1)$-partite 3-graphs already satisfy the differentiated diagonal-vanishing conditions. The counterexamples below show that the reverse containment fails.

\begin{proof}[Proof of \cref{thm:intro-counterexample}]
We first record the easy containment as a claim.

\begin{claim}\label{clm:partite-generators-in-DI}
For all $\ell\ge 3$ and $n\ge \ell$, $\widehat{\mathrm{P}}^{(3)}(n,\ell)\subseteq \mathrm{DI}(n,\ell)$.
\end{claim}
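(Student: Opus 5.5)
Since $\mathrm{DI}(n,\ell)$ is an ideal, it suffices to show that each generator $p_{\mathcal G}$ lies in $\mathrm{DI}(n,\ell)$, where $\mathcal G$ is an $(\ell-1)$-partite 3-graph on $[n]$. So fix such a $\mathcal G$ with vertex partition $V_1\cup\cdots\cup V_{\ell-1}$, an index $i\in[n]$, an integer $0\le j\le n-3$, and an $\ell$-set $L\subseteq[n]$. We must show that $\partial^j p_{\mathcal G}/\partial x_i^j$ vanishes on $\Delta_L$.

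The first step is pigeonhole. Since $|L|=\ell>\ell-1$, two vertices $a<b$ of $L$ lie in the same class $V_t$. Because $\mathcal G$ is $(\ell-1)$-partite, no edge contains both $a$ and $b$. Hence for every $k\notin\{a,b\}$ the triple $\{a,b,k\}$ is a non-edge, and each such triple contributes a factor $(x_a-x_b)$ to $p_{\mathcal G}$. There are $n-2$ such $k$, so $(x_a-x_b)^{n-2}$ divides $p_{\mathcal G}$. Write $p_{\mathcal G}=(x_a-x_b)^{n-2}q$.

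The second step handles the derivative. Differentiating $j\le n-3$ times with respect to $x_i$ gives, by the Leibniz rule,
\[
\frac{\partial^j p_{\mathcal G}}{\partial x_i^j}=\sum_{m=0}^{j}\binom{j}{m}\,\frac{\partial^m (x_a-x_b)^{n-2}}{\partial x_i^m}\,\frac{\partial^{j-m} q}{\partial x_i^{j-m}}.
\]
Each derivative of $(x_a-x_b)^{n-2}$ is either zero (when $i\notin\{a,b\}$ and $m\ge1$) or a constant multiple of $(x_a-x_b)^{n-2-m}$. Since $m\le n-3$, the exponent $n-2-m$ is at least $1$. Therefore $(x_a-x_b)$ divides $\partial^j p_{\mathcal G}/\partial x_i^j$. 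On $\Delta_L$ we have $x_a=x_b$, so the derivative vanishes there. This holds for every $L$, every $i$ and every $j\le n-3$, so $p_{\mathcal G}\in\mathrm{DI}(n,\ell)$.

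The main point to check is the exponent count: the multiplicity $n-2$ must exceed the maximal derivative order $n-3$, and this requires the full missing star through the pair $\{a,b\}$. That star is exactly what partiteness provides. The restriction to characteristic zero plays no role in this direction.
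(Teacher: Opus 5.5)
Your proposal is correct and matches the paper's proof. Pigeonhole puts a pair $a,b\in L$ in one part, the resulting full missing star gives $(x_a-x_b)^{n-2}\mid p_{\mathcal G}$, and since $j\le n-3$ at least one factor of $x_a-x_b$ survives differentiation. Your explicit Leibniz expansion is just a more detailed version of the paper's remark that differentiating $j$ times lowers the exponent by at most $j$.
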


\begin{proof}[Proof of the claim]
Since $\mathrm{DI}(n,\ell)$ is an ideal, it is enough to show that each generator $p_{\mathcal{G}}$ belongs to $\mathrm{DI}(n,\ell)$, where $\mathcal{G}$ is an $(\ell-1)$-partite 3-graph on $[n]$. Fix an $\ell$-set $L\subseteq[n]$. By the pigeonhole principle, two vertices $a,b\in L$ lie in the same part of the $(\ell-1)$-partition. No edge of $\mathcal{G}$ contains both $a$ and $b$. Hence every triple $\{a,b,c\}$ with $c\in[n]\setminus\{a,b\}$ is missing from $\mathcal{G}$, and the factor $(x_a-x_b)$ occurs once for each such $c$. Thus $(x_a-x_b)^{n-2}\mid p_{\mathcal{G}}$.
Now fix $i\in[n]$ and $0\le j\le n-3$. If $i\notin\{a,b\}$, this factor is independent of $x_i$ and divides $\partial^j p_{\mathcal{G}}/\partial x_i^j$. If $i=a$ or $i=b$, differentiating $j$ times can reduce the exponent of $(x_a-x_b)$ by at most $j$, so every term of $\partial^j p_{\mathcal{G}}/\partial x_i^j$ remains divisible by $(x_a-x_b)^{n-2-j}$, which has positive exponent. Therefore
$\partial^j p_{\mathcal{G}}/\partial x_i^j$ vanishes when the variables indexed by $L$ are identified. Since $L$, $i$, and $j$ were arbitrary, $p_{\mathcal{G}}\in \mathrm{DI}(n,\ell)$.
\end{proof}

By the claim, it remains only to exhibit an element of $\mathrm{DI}(n,\ell)$ which is not in $\widehat{\mathrm{P}}^{(3)}(n,\ell)$. We split the construction into three cases.

\medskip\noindent\textit{Case 1: $\ell\ge 4$.}
Put $q\coloneqq\ell-1$, so $q\ge 3$, and choose a balanced partition
$[n]=V_1\sqcup\cdots\sqcup V_{\ell-2}$ into $\ell-2=q-1$ nonempty parts. Define the Vandermonde-type product
\[
F_{n,\ell}\coloneqq\prod_{s=1}^{\ell-2}\prod_{\substack{a<b\\ a,b\in V_s}}(x_a-x_b).
\]
We first verify that $F_{n,\ell}\in \mathrm{DI}(n,\ell)$. Fix an $\ell$-set $L\subseteq[n]$ and a variable $x_i$. If $i\in L$, then the $\ell-1$ vertices of $L\setminus\{i\}$ are distributed among only $\ell-2$ parts, and hence two of them, say $a$ and $b$, lie in the same part. The factor $x_a-x_b$ divides $F_{n,\ell}$ and is independent of $x_i$. Therefore it also divides every derivative $\partial^jF_{n,\ell}/\partial x_i^j$.

If $i\notin L$, then the $\ell$ vertices of $L$ themselves are distributed among $\ell-2$ parts, so the same argument gives a pair $a,b\in L$ with $x_a-x_b$ dividing every derivative with respect to $x_i$. In both cases the derivative vanishes on the diagonal indexed by $L$. Since $L,i$, and $j$ were arbitrary, $F_{n,\ell}\in \mathrm{DI}(n,\ell)$.

It remains to show that $F_{n,\ell}\notin \widehat{\mathrm{P}}^{(3)}(n,\ell)$. Each generator $p_{\mathcal{G}}$ of $\widehat{\mathrm{P}}^{(3)}(n,\ell)$ is homogeneous of degree $\deg p_{\mathcal{G}}=3\left(\binom n3-|\mathcal{G}|\right)$. The largest number of edges in an $(\ell-1)$-partite 3-graph on $[n]$ is $\tr_3(n,\ell-1)=\tr_3(n,q)$. Hence all homogeneous generators of $\widehat{\mathrm{P}}^{(3)}(n,\ell)$ have degree at least $D_{n,\ell}\coloneqq 3\left(\binom n3-\tr_3(n,q)\right)$. Since $\widehat{\mathrm{P}}^{(3)}(n,\ell)$ is homogeneous and generated in degrees at least $D_{n,\ell}$, it contains no nonzero homogeneous element of smaller degree.

We now compare degrees. The chosen partition has $q-1$ balanced classes, and therefore
\[
\deg F_{n,\ell}=\sum_{s\in[q-1]}\binom{|V_s|}{2}<\frac{n^2}{2(q-1)}.
\]
Let $W_1,\ldots,W_q$ be the balanced $q$-partition defining $T_3(n,q)$, and set $a\coloneqq\left\lceil n/q\right\rceil$.
The triples missing from $T_3(n,q)$ include all triples having two vertices in one part and the third vertex outside that part. Thus
\[
\binom n3-\tr_3(n,q)\ge \sum_{s\in[q]}\binom{|W_s|}{2}(n-|W_s|).
\]
These counted triples are disjoint, since a triple with two vertices in one part and the third vertex outside that part determines the repeated part uniquely. Since $|W_s|\le a$ for every $s$ and $n\ge q+1$, and since $\binom m2\ge m-1$ for every integer $m\ge 0$, we have $\sum_{s\in[q]}\binom{|W_s|}{2}\ge n-q$. Consequently $\binom n3-\tr_3(n,q)\ge (n-q)(n-a)$. It follows that $D_{n,\ell}\ge 3(n-q)(n-a)$. We claim that $3(n-q)(n-a)>n^2/(2(q-1))$. Indeed, $a\le (n+q-1)/q$, whence $n-a\ge (q-1)(n-1)/q$.
Moreover, the function $(n-q)(n-1)/n^2$ is increasing for $n\ge q+1$ and equals $q/(q+1)^2$ at $n=q+1$. Therefore
\[
\frac{6(q-1)(n-q)(n-a)}{n^2}
\ge
\frac{6(q-1)^2}{q}\cdot \frac{(n-q)(n-1)}{n^2}
\ge
\frac{6(q-1)^2}{(q+1)^2}>1
\]
for every $q\ge 3$. Hence $\deg F_{n,\ell}<D_{n,\ell}$, which proves $F_{n,\ell}\notin \widehat{\mathrm{P}}^{(3)}(n,\ell)$.

\medskip\noindent\textit{Case 2: $\ell=3$ and $n\ge 4$.}
Take the full Vandermonde product $F_{n,3}\coloneqq\prod_{1\le a<b\le n}(x_a-x_b)$.
For any 3-set $L$ and any $i\in[n]$, if $i\in L$ then the two vertices of $L\setminus\{i\}$ give a factor independent of $x_i$; if $i\notin L$, any two vertices of $L$ give such a factor. Thus every derivative with respect to $x_i$ that appears in the definition of $\mathrm{DI}(n,3)$ vanishes on the diagonal indexed by $L$. Hence $F_{n,3}\in \mathrm{DI}(n,3)$.

A 2-partite 3-graph has no edges. Therefore every generator of $\widehat{\mathrm{P}}^{(3)}(n,3)$ has degree $3\binom n3$. On the other hand, $\deg F_{n,3}=\binom n2<3\binom n3$ for $n\ge 4$. Hence $F_{n,3}\notin \widehat{\mathrm{P}}^{(3)}(n,3)$.

\medskip\noindent\textit{Case 3: $(\ell,n)=(3,3)$.}
Here $n-3=0$, so $\mathrm{DI}(3,3)=\mathrm{I}(3,3)$. The linear polynomial $F_{3,3}\coloneqq x_1-x_2$ vanishes on the diagonal $x_1=x_2=x_3$, and therefore belongs to $\mathrm{DI}(3,3)$. However, every 2-partite 3-graph on $[3]$ has no edge, so $\widehat{\mathrm{P}}^{(3)}(3,3)$ is generated by $(x_1-x_2)(x_1-x_3)(x_2-x_3)$. This ideal contains no nonzero element of degree one. Hence $F_{3,3}\notin \widehat{\mathrm{P}}^{(3)}(3,3)$.

The three cases prove \cref{thm:intro-counterexample}.
\end{proof}

\section{The cover-ideal method}\label{sec:cover-ideal-method}

This section develops the monomial-ideal framework used in the second part of the paper. The first subsection proves the Hilbert-function symmetrization result that replaces clique-counting symmetrization. The second subsection translates ordinary and generalized Tur\'{a}n problems into cover-ideal language. The final subsection applies this translation to Mubayi's clique-expansion family, where the cover ideal becomes a missing star ideal.

\subsection{Square-zero quotients and Hilbert-function symmetrization}\label{sec:hilbert-symmetrization}

This subsection proves the algebraic extremal lemma used later in place of an auxiliary-graph clique-counting theorem. It will be applied twice: first to the clique-counting generalized Tur\'{a}n problem, and then to the initial-degree computation for the star ideal. The objects are quadratic monomial quotients with all variables square-zero. Their Hilbert functions give the same numbers, but the proof below is written entirely in terms of standard monomials, links, quotients, and cloning of variables.

Let $S\coloneqq\mathbb{K}[x_1,\ldots,x_n]$, and let $A\coloneqq S/I$, where $I$ is generated by $x_1^2,\ldots,x_n^2$ together with some squarefree quadratic monomials $x_ax_b$.
We call such an $A$ a \emph{square-zero quadratic monomial quotient}.
Equivalently, this quotient can be read as a graph on vertex set $[n]$: join $i$ and $j$ when $x_ix_j$ is nonzero in $A$.
Then the standard degree-$d$ monomials are exactly the $d$-cliques of this graph, and the condition $A_{q+1}=0$ says that there is no $(q+1)$-clique.
The theorem below is therefore a Zykov-type clique-counting statement written in quotient language.

For fixed $q\ge 1$ and $r\ge 1$, define $\mathcal H(n,q,r)\coloneqq\max_A \dim_{\mathbb{K}} A_r$, where the maximum is over all square-zero quadratic monomial quotients $A$ on $n$ variables with $A_{q+1}=0$.
The main result of this subsection is $\mathcal H(n,q,r)=\tr_r(n,q)$. The proof below gives the upper bound, which is the only direction needed later. The reverse inequality is attained by the complete balanced $q$-partite square-zero quotient: take a balanced partition of $[n]$ into $q$ parts and kill exactly the squares and the products of variables in the same part. Its degree-$r$ Hilbert function is $\tr_r(n,q)$ and its degree-$(q+1)$ piece is zero.

Two variables $x_i,x_j$ are called \emph{parallel} in $A$ if $x_ix_j=0$ and, for every $h\notin\{i,j\}$, $x_ix_h=0$ if and only if $x_jx_h=0$. This is an equivalence relation on the variables. Its equivalence classes will be called \emph{parallel classes}. Inside each parallel class all pairwise products vanish; between two distinct classes either all products vanish or all products are nonzero.
In the graph interpretation above, variables in the same parallel class are twin vertices: they are nonadjacent to each other and have the same neighbors outside the class.

If $C$ is a parallel class and $x_c\in C$, set $\lambda_C(d)\coloneqq\dim_{\mathbb{K}}\operatorname{span}\left\{w\in A_d\colon x_cw\ne 0\text{ in }A\right\}$.
Equivalently, $\lambda_C(d)=\dim_{\mathbb{K}}(x_cA_d)$. This number is independent of the choice of $x_c\in C$.

The proof of the upper bound uses one local symmetrization move: when two parallel classes have zero product between them, we clone the class which gives the larger degree-$r$ contribution.
Iterating this move gives the Hilbert-function form of the Tur\'{a}n bound needed later.

\begin{theorem}\label{thm:hilbert-turan}
Let $A\coloneqq\mathbb{K}[x_1,\ldots,x_n]/I$ be a square-zero quadratic monomial quotient. If $A_{q+1}=0$, then, for every $r\ge 1$, $\dim_{\mathbb{K}} A_r\le \tr_r(n,q)$. Here $\tr_r(n,q)$ is the degree-$r$ Hilbert function of the complete balanced $q$-partite square-zero quotient. The bound is sharp.
\end{theorem}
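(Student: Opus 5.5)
The plan is to run Zykov symmetrization entirely inside the quotient. Since $I$ is a monomial ideal, we have $\dim_{\mathbb{K}}A_r=\#\{\text{standard squarefree monomials of degree }r\}$, which is the number of $r$-cliques of the graph $G_A$ (edge $ij$ iff $x_ix_j\ne0$). The condition $A_{q+1}=0$ says $G_A$ has no $(q+1)$-clique. Among all such quotients on $n$ variables, I would fix one maximizing $\dim A_r$. Among maximizers I would choose one with the fewest parallel classes, or more robustly one maximizing a potential such as the size of the largest class. Sharpness is already recorded above via the complete balanced $q$-partite quotient, so only the upper bound needs proof.

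\emph{Step 1: the cloning move.} Let $x_i,x_j$ satisfy $x_ix_j=0$. Write $\lambda_i\coloneqq\dim(x_iA_{r-1})$, the number of standard degree-$r$ monomials divisible by $x_i$. Replace $x_j$ by a clone of $x_i$: declare $x_jx_h\ne0$ exactly when $x_ix_h\ne0$ (for $h\ne i,j$), and keep $x_ix_j=0$. The new quotient $A'$ is still square-zero quadratic. No standard monomial contains both $x_i$ and $x_j$ before or after the move, so
\[
\dim A'_r=\dim A_r-\lambda_j+\lambda_i .
\]
Moreover $A'_{q+1}=0$: a clique in $G_{A'}$ contains at most one of $i,j$, and if it contains $j$ we swap it for $i$, which gives a clique of the same size in $G_A$. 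Hence by maximality every nonadjacent pair has $\lambda_i=\lambda_j$. It follows that cloning either way preserves maximality.

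\emph{Step 2: reaching a complete multipartite structure.} This is the main obstacle, namely the standard Zykov argument that non-adjacency can be made transitive. I would argue as follows. Suppose $x_ix_j=0$, $x_jx_k=0$, but $x_ix_k\ne0$. Then $\lambda_i=\lambda_j=\lambda_k$. Clone $x_j$ onto both $x_i$ and $x_k$, doing both replacements at once. The standard monomials containing $x_i$ or $x_k$ number $\lambda_i+\lambda_k-\mu$, where $\mu>0$ counts those containing $x_ix_k$; this is positive when $r\ge2$ and the pair $ik$ lies in some $r$-clique. After the replacement, the count is $2\lambda_j$. The result is a strict increase, contradicting maximality. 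In the degenerate case where no $r$-clique uses $ik$, I would instead use the secondary potential, counting edges or parallel classes, to break ties. Once non-adjacency is transitive, the parallel classes partition $[n]$ and all cross products are nonzero. So $G_A$ is complete multipartite with at most $q$ parts, since it has no $(q+1)$-clique.

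\emph{Step 3: balancing.} For a complete $p$-partite quotient with $p\le q$ and class sizes $n_1,\dots,n_p$ (pad with empty classes up to $q$), we have $\dim A_r=e_r(n_1,\dots,n_q)$, the elementary symmetric polynomial. If $n_1\ge n_2+2$, moving one variable from class $1$ to class $2$ changes $e_r$ by
\[
(n_1-n_2-1)\,e_{r-2}(n_3,\dots,n_q)\ge0 .
\]
So balanced sizes maximize $e_r$, and the maximum equals $\tr_r(n,q)$. The case $r=1$ is trivial, since $\dim A_1\le n=\tr_1(n,q)$, and for $r>q$ both sides are $0$.
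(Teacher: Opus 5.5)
\paragraph{Gap in the degenerate case of Step 2.} Your Steps~1 and~3 are correct, and the triple argument in Step~2 is correct whenever $\mu>0$. The degenerate case $\mu=0$, where the pair $ik$ lies in no $r$-clique, is left genuinely open.

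You propose to break ties by a secondary potential (edges, parallel classes, or size of the largest class). However, you never specify a move that improves any of these. The only move on the table, cloning $x_j$ onto both $x_i$ and $x_k$, is not monotone for any of them:
\begin{itemize}
\item The number of nonzero products $x_ax_b$ changes by $2\deg j-\deg i-\deg k+1$, with degrees taken in $G_A$. This can go either way.
\item If $x_i$ has a twin $x_{i'}$, the move sends $i$ to the class of $j$ but leaves $i'$ behind. So classes can split, and neither the number of classes nor the largest class size need improve.
\end{itemize}
Since the argument is by contradiction against an extremal choice, an unspecified tie-breaker means transitivity of non-adjacency is not actually established.

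\paragraph{Two ways to close it.}
\begin{enumerate}
\item Among maximizers of $\dim_{\mathbb{K}}A_r$, choose one with the fewest nonzero products $x_ax_b$. If $\mu=0$, adding $x_ix_k$ to $I$ keeps $A_{q+1}=0$. It kills no standard degree-$r$ monomial and lowers the number of nonzero products, which is a contradiction. Hence $\mu>0$ always, and your double cloning finishes Step~2.
\item Keep your first suggestion (fewest parallel classes), but replace the triple argument. If two distinct classes $U,V$ have zero product, clone the elements of $V$ one at a time onto a fixed $u\in U$. Each $v$ stays nonadjacent to $u$ until it is cloned. So by Step~1 every intermediate quotient is still a maximizer, and $\lambda_u=\lambda_v$ holds at that stage. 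At the end $U\cup V$ is a single twin class, and no other class splits, because its members see $u$ and each cloned $v$ identically. Thus the number of classes drops, which is a contradiction.
\end{enumerate}

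\paragraph{Comparison with the paper.} Route (b) is essentially the paper's proof. The paper clones an entire parallel class $V\leftarrow U$ in one move, choosing the direction by comparing $\lambda_U(r-1)$ with $\lambda_V(r-1)$. Every move strictly decreases the number of parallel classes. As a result, the paper runs a terminating process on the given $A$, needs neither a global maximizer nor Zykov's triple argument, and never meets the degenerate case.
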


This theorem is best viewed as the usual Zykov symmetrization~\cite{Zykov} written in quotient language. Indeed, if one applies it to the Stanley--Reisner quotient of a graph $F$, then $\dim_{\mathbb{K}} A_r$ counts the $r$-cliques of $F$, while $A_{q+1}=0$ says that $F$ has no $(q+1)$-clique. Its limitation is also worth noting: the result is not a new strengthening of Zykov's theorem, and the main idea behind its proof is still essentially combinatorial. The point of the formulation is that the same balancing argument can be carried out directly on square-zero quadratic monomial quotients, which is the form needed for the cover-ideal applications below.

\begin{proof}
The case $r=1$ is immediate, so fix $r\ge 2$. We first prove the local cloning step.

\begin{claim}\label{clm:cloning}
Let $A$ be a square-zero quadratic monomial quotient with $A_{q+1}=0$. Let $U$ and $V$ be distinct parallel classes such that $x_ux_v=0$ for all $u\in U$ and $v\in V$. Then one can replace either $U$ by clones of $V$ or $V$ by clones of $U$ so as to obtain another square-zero quadratic monomial quotient $A'$ on the same $n$ variables with $(A')_{q+1}=0$ and $\dim_{\mathbb{K}} (A')_r\ge \dim_{\mathbb{K}} A_r$.
\end{claim}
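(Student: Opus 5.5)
The argument is the standard Zykov cloning move, phrased through standard monomials. Write $G$ for the graph of $A$, so $\dim A_r$ is the number of $r$-cliques of $G$. Since $U\cup V$ is independent in $G$ (products inside a class and between $U$ and $V$ vanish), every standard monomial of degree $r$ is divisible by at most one variable from $U\cup V$. We therefore split
\[
\dim_{\mathbb{K}} A_r = N_0 + |U|\,\mu_U + |V|\,\mu_V,
\]
where $N_0$ counts degree-$r$ standard monomials with no variable in $U\cup V$, and $\mu_U$ counts degree-$(r-1)$ standard monomials $w$ with $x_uw\ne 0$. Since $U$ is a parallel class, $\mu_U$ does not depend on the choice of $u\in U$; it equals $\lambda_U(r-1)$, restricted to monomials avoiding $U\cup V$. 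Such a $w$ cannot involve $V$ (as $x_ux_v=0$) nor $U$, so it is supported outside $U\cup V$. The same holds for $\mu_V$. The key point is that $N_0$, $\mu_U$ and $\mu_V$ depend only on the nonzero products among variables outside $U\cup V$ together with the common neighbourhoods $N(U)$ and $N(V)$ outside $U\cup V$.

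Next we define $A'$. Suppose $\mu_V\ge\mu_U$ (otherwise swap the roles). Keep all generators of $I$ that involve no variable of $U$. For each $u\in U$ and each $h$, declare $x_ux_h=0$ exactly when $x_vx_h=0$ for a fixed $v\in V$. Also make every product among variables of $U\cup V$ vanish, and keep all squares zero. Then $A'$ is again a square-zero quadratic monomial quotient on the same variables, and $U\cup V$ becomes a single parallel class with neighbourhood $N(V)$. Variables outside $U\cup V$ keep their mutual products. Recomputing with the same decomposition gives
\[
\dim(A')_r = N_0 + (|U|+|V|)\mu_V \ge \dim A_r,
\]
because $N_0$ and $\mu_V$ are unchanged: they only involve data outside $U$.

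It remains to check $(A')_{q+1}=0$, that is, that $G'$ has no $(q+1)$-clique. A clique of $G'$ contains at most one vertex of the independent set $U\cup V$. If it contains none, it is a clique of $G$. If it contains some $u\in U$, replacing $u$ by a fixed $v\in V$ gives a clique of $G$ of the same size, since $u$ and $v$ have the same neighbourhood in $G'$ outside $U\cup V$, and this neighbourhood agrees with $v$'s neighbourhood in $G$. So $G'$ has no larger cliques than $G$, and $A_{q+1}=0$ gives $(A')_{q+1}=0$.

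The main obstacle is the bookkeeping in the first step. One must confirm that the monomials counted by $\mu_U$ and $\mu_V$ avoid $U\cup V$, so that these counts are literally unchanged after cloning. This uses both hypotheses: $U$ and $V$ are parallel classes, and $x_ux_v=0$ between them. If either failed, a standard monomial could contain a variable from each class, or $\mu$ would depend on the chosen representative, and the linear decomposition of $\dim A_r$ would break.
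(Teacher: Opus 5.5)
Your proposal is correct and is essentially the paper's argument: the same cloning of one parallel class onto the other, and the same observation that a degree-$r$ standard monomial uses at most one variable of $U\cup V$, so the count splits as $N_0+|U|\lambda_U(r-1)+|V|\lambda_V(r-1)$. The verification of $(A')_{q+1}=0$ is also the paper's, by substituting a representative of the class being copied; the only differences are that the paper computes the difference $|V|\bigl(\lambda_U(r-1)-\lambda_V(r-1)\bigr)$ for one direction and reverses if it is negative instead of choosing the direction first, and that in your clique check you should also state the (immediate) case of a clique meeting $V$ but not $U$, which is a clique of $G$ because all products not involving $U$ are unchanged.
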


\begin{proof}[Proof of the claim]
We describe the operation $V\leftarrow U$. Let $A^{V\leftarrow U}$ be the quotient obtained as follows. All products inside $U\cup V$ are set equal to zero. For $v\in V$ and for a variable $z\notin U\cup V$, we impose $x_vz=0$ if and only if $x_uz=0$ for some (equivalently, every) $u\in U$. All products not involving $V$ are kept as in $A$. Thus the variables in $V$ become clones of the variables in $U$.

First, $A^{V\leftarrow U}_{q+1}=0$. Indeed, a standard monomial of degree $q+1$ in $A^{V\leftarrow U}$ that avoids $V$ would already have been standard in $A$, impossible. A standard monomial that contains a variable $x_v$ with $v\in V$ contains no other variable from $U\cup V$; after replacing $x_v$ by any $x_u$ with $u\in U$, its support would be standard in $A$ by construction. This would give a nonzero element of $A_{q+1}$, again impossible.

Now compare degree $r$. The standard monomials of degree $r$ avoiding $V$ are unchanged. Since products inside $V$ and between $U$ and $V$ are zero before and after cloning, a standard degree-$r$ monomial containing a variable of $V$ contains exactly one such variable and no variable from $U$. In $A$ these monomials are counted by $|V|\lambda_V(r-1)$, whereas in $A^{V\leftarrow U}$ they are counted by $|V|\lambda_U(r-1)$.
Consequently
\[
\dim_{\mathbb{K}} A^{V\leftarrow U}_r-\dim_{\mathbb{K}} A_r
=|V|\bigl(\lambda_U(r-1)-\lambda_V(r-1)\bigr).
\]
If this quantity is nonnegative, take $A'\coloneqq A^{V\leftarrow U}$. Otherwise perform the opposite cloning $U\leftarrow V$, for which the corresponding difference is $|U|\bigl(\lambda_V(r-1)-\lambda_U(r-1)\bigr)>0$.
In either case the degree-$r$ Hilbert function does not decrease, and the classes $U,V$ are merged.
\end{proof}

Starting from $A$, repeatedly apply the claim whenever two distinct parallel classes have zero product between them. Each step preserves the condition that the degree-$(q+1)$ piece is zero and does not decrease the degree-$r$ Hilbert function. It also decreases the number of parallel classes. Hence the process terminates after finitely many steps.

Let $B$ be the terminal quotient. Then $\dim_{\mathbb{K}} A_r\le \dim_{\mathbb{K}} B_r$ and $B_{q+1}=0$, and any two distinct parallel classes of $B$ have nonzero product between them. Thus there is a partition $[n]=P_1\sqcup\cdots\sqcup P_s$ such that $x_ix_j=0$ if and only if $i,j$ belong to the same $P_a$. Writing $n_a\coloneqq|P_a|$, the standard degree-$r$ monomials of $B$ are obtained by choosing $r$ distinct classes and one variable from each chosen class. Hence $\dim_{\mathbb{K}} B_r=e_r(n_1,\ldots,n_s)$, where $e_r$ denotes the elementary symmetric polynomial of degree $r$.

Since $B_{q+1}=0$, one must have $s\le q$; otherwise a product of one variable from each of $q+1$ distinct classes would be nonzero. By appending zero parts, we may regard $(n_1,\ldots,n_s)$ as a $q$-tuple summing to $n$.

It remains to show that $e_r(n_1,
\ldots,n_q)$ is maximized by a balanced $q$-tuple. Suppose two entries satisfy $a\ge b+2$, and let $\mathbf c$ denote the remaining entries. Replacing $(a,b)$ by $(a-1,b+1)$ changes the value of $e_r$ by
\[
e_r(\mathbf c,a-1,b+1)-e_r(\mathbf c,a,b)
=(a-b-1)e_{r-2}(\mathbf c)\ge 0,
\]
with the convention $e_0=1$ and $e_d=0$ for $d<0$. Repeated balancing therefore leads to a balanced $q$-tuple without decreasing $e_r$. Thus $\dim_{\mathbb{K}} B_r\le \tr_r(n,q)$, and the theorem follows.
\end{proof}

\subsection{Cover ideals for ordinary and generalized Tur\'{a}n problems}\label{sec:cover-ideals}

We now give the general algebraic translation between forbidden-copy conditions and cover ideals. Let $\mathcal F$ be a family of $r$-graphs, and let $R_{n,r}\coloneqq\mathbb{K}[y_E\colon E\in\binom{[n]}r]$ be the polynomial ring with one variable for each possible $r$-edge. Denote by $\mathcal C_{\mathcal F,n}$ the set of all copies, inside $\binom{[n]}r$, of members of $\mathcal F$.

The $\mathcal F$-free edge sets form a \emph{simplicial complex}
\[
\Delta_{\mathcal F,n}\coloneqq
\left\{\mathcal{G}\subseteq \tbinom{[n]}r\colon
\mathcal{G}\text{ is }\mathcal F\text{-free}\right\}.
\]
Its \emph{Stanley--Reisner ideal} is
\[
I_{\mathcal F,n}\coloneqq\left\langle\prod_{E\in H}y_E\colon H\in \mathcal C_{\mathcal F,n}\right\rangle.
\]
The \emph{Alexander-dual cover ideal} is
\[
C_{\mathcal F,n}\coloneqq I_{\mathcal F,n}^{\vee}
=\bigcap_{H\in \mathcal C_{\mathcal F,n}}\langle y_E\colon E\in H\rangle.
\]
Thus a monomial lies in $C_{\mathcal F,n}$ precisely when its support meets every forbidden copy.
In concrete terms, a squarefree monomial $y_M$ records a set $M$ of $r$-edges that are missing.
The condition $y_M\in C_{\mathcal F,n}$ says that every forbidden copy contains at least one edge from $M$, or equivalently that the complementary $r$-graph $\binom{[n]}r\setminus M$ is $\mathcal F$-free.

The next lemma makes the translation to the ordinary Tur\'{a}n problem exact.

\begin{lemma}\label{lem:cover-translation}
For every family $\mathcal F$ of $r$-graphs, $\alpha(C_{\mathcal F,n})=\binom nr-\ex(n,\mathcal F)$. Moreover, the squarefree monomials of degree $\alpha(C_{\mathcal F,n})$ are exactly the complements of extremal $\mathcal F$-free $r$-graphs.
\end{lemma}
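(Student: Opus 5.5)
The plan is to compare both sides through squarefree monomials, since $C_{\mathcal F,n}$ is a squarefree monomial ideal: it is an intersection of ideals generated by variables, hence itself generated by squarefree monomials. So its initial degree is the minimum degree of a squarefree monomial $y_M$ lying in it. The first step is to record the membership criterion already stated in the text. A monomial $m$ lies in $C_{\mathcal F,n}$ if and only if $m\in\langle y_E\colon E\in H\rangle$ for every $H\in\mathcal C_{\mathcal F,n}$. For an ideal generated by variables, this holds exactly when some variable $y_E$ with $E\in H$ divides $m$. Hence $m\in C_{\mathcal F,n}$ if and only if its support $M$ meets every copy $H$. This in turn holds if and only if the $r$-graph $\binom{[n]}r\setminus M$ contains no copy of a member of $\mathcal F$, i.e.\ is $\mathcal F$-free.

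The second step is to reduce to squarefree monomials. If $m\in C_{\mathcal F,n}$ has support $M$, then $y_M$ divides $m$, has degree at most $\deg m$, and lies in $C_{\mathcal F,n}$ by the criterion, which depends only on the support. Thus $\alpha(C_{\mathcal F,n})=\min\{|M|\colon \binom{[n]}r\setminus M\text{ is }\mathcal F\text{-free}\}$. Writing $\mathcal G=\binom{[n]}r\setminus M$, minimizing $|M|=\binom nr-|\mathcal G|$ is the same as maximizing $|\mathcal G|$ over $\mathcal F$-free $r$-graphs on $[n]$. This gives $\binom nr-\ex(n,\mathcal F)$.

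I also need $C_{\mathcal F,n}\neq 0$ so that $\alpha$ is defined. This holds because the product of all variables lies in it, its support being everything, with complement the empty graph, which is $\mathcal F$-free as long as no member of $\mathcal F$ is edgeless. I would add a remark that members of $\mathcal F$ are assumed to have at least one edge. Otherwise $\mathcal C_{\mathcal F,n}$ contains the empty set, the intersection includes the zero ideal, and both sides degenerate.

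For the ``moreover'' part, the same bijection $M\leftrightarrow\binom{[n]}r\setminus M$ shows that a squarefree monomial $y_M$ of degree $\alpha(C_{\mathcal F,n})$ lies in $C_{\mathcal F,n}$ exactly when its complement is an $\mathcal F$-free graph with $\ex(n,\mathcal F)$ edges, that is, an extremal graph. The only delicate point, and hardly an obstacle, is the divisibility and support argument for membership in intersections of prime monomial ideals; I would state it explicitly using the fact that a monomial lies in a monomial ideal iff it is divisible by a generator.
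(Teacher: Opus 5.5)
Your proposal is correct and follows essentially the same route as the paper: you reduce to squarefree monomials $y_M$ and observe that $y_M\in C_{\mathcal F,n}$ exactly when $\binom{[n]}r\setminus M$ is $\mathcal F$-free. Minimizing $|M|$ is then the same as maximizing $|\mathcal G|$. Your explicit support-reduction step, and your remark that $C_{\mathcal F,n}\neq 0$ requires every member of $\mathcal F$ to have at least one edge, are details the paper leaves implicit rather than a different argument.
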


\begin{proof}
Since $C_{\mathcal F,n}$ is a squarefree monomial ideal, its initial degree is attained by a squarefree monomial. For $M\subseteq \binom{[n]}r$, write $y_M\coloneqq\prod_{E\in M}y_E$.
Then $y_M\in C_{\mathcal F,n}$ if and only if $M$ intersects every forbidden copy $H\in\mathcal C_{\mathcal F,n}$. This is equivalent to saying that the complementary edge set
$\binom{[n]}r\setminus M$ is $\mathcal F$-free. Minimizing $|M|$ over all squarefree monomials in $C_{\mathcal F,n}$ is therefore the same as maximizing the number of edges in an $\mathcal F$-free $r$-graph. Hence $\alpha(C_{\mathcal F,n})=\binom nr-\ex(n,\mathcal F)$. The characterization of the monomials attaining the initial degree follows from the same equivalence.
\end{proof}

The limitation of this translation is worth making explicit.
For a general forbidden family, computing the initial degree of $C_{\mathcal F,n}$ is essentially the original extremal problem. The useful point is that some families, including $\mathcal{K}_{\ell}^{(r)}$, give cover ideals with additional structure.

We next adapt this translation to generalized Tur\'{a}n numbers, where the forbidden graph and the graph being counted may be different.

Let $T$ be the graph to be counted and $F$ the graph to be forbidden. We assume, as usual, that both graphs have no isolated vertices. Let $R_n^{(2)}\coloneqq\mathbb{K}[y_e\colon e\in K_n]$ be the edge-variable ring of the complete graph. Let $\mathcal T_{T,n}$ be the set of all copies of $T$ in $K_n$, and let $\mathcal C_{F,n}$ be the set of all copies of $F$ in $K_n$. The \emph{forbidden-copy ideal} and its Alexander dual are
\[
I_{F,n}\coloneqq\langle y_A\colon A\in \mathcal C_{F,n}\rangle
\quad\text{and}\quad 
C_{F,n}\coloneqq I_{F,n}^{\vee}=\bigcap_{A\in \mathcal C_{F,n}}\langle y_e\colon e\in A\rangle,
\]
where $y_A\coloneqq\prod_{e\in A}y_e$.

The copies of $T$ are represented by the finite-dimensional \emph{monomial subspace} $V_T(n)\subseteq R_n^{(2)}$, namely the $\mathbb{K}$-span of the monomials $y_B$ with $B\in\mathcal T_{T,n}$.
For $M\subseteq K_n$, set $\mathfrak m_M\coloneqq\langle y_e\colon e\in M\rangle$ and $Q_M\coloneqq R_n^{(2)}/\mathfrak m_M$, and let $\pi_M\colon R_n^{(2)}\longrightarrow Q_M$ be the quotient map.
Here passing to $Q_M$ simply sets the edge variables $y_e$ with $e\in M$ equal to zero.
A monomial for a copy of $T$ survives in $Q_M$ exactly when that copy uses no edge from $M$.
The \emph{quotient rank} of $M$ on the $T$-copy space is $\rho_T(M)\coloneqq\dim_{\mathbb{K}} \pi_M(V_T(n))$. This is the number of $T$-copy monomials which remain nonzero after the variables in $M$ are killed. Dually, define the \emph{$T$-initial loss} of the cover ideal by
\[
\alpha_T(C_{F,n})\coloneqq
\min_{\substack{y_M\in C_{F,n}\\ y_M\text{ squarefree}}}
\dim_{\mathbb{K}} \ker\bigl(\pi_M|_{V_T(n)}\bigr)
=
\min_{\substack{y_M\in C_{F,n}\\ y_M\text{ squarefree}}}
\dim_{\mathbb{K}}\bigl(V_T(n)\cap\mathfrak m_M\bigr).
\]
Thus $\alpha_T$ is defined by a quotient map rather than by ordinary degree. Since $V_T(n)$ has the monomial basis indexed by $T$-copies, this loss is equivalently the number of $T$-copy monomials killed by the missing-edge ideal $\mathfrak m_M$.

When $T=K_2$, the space $V_T(n)$ is spanned by the edge variables, so $\alpha_{K_2}(C_{F,n})=\alpha(C_{F,n})$ and the construction specializes to the ordinary cover-ideal translation in \cref{lem:cover-translation}.
For general $T$, ordinary degree no longer measures the objective; after choosing an extremal missing-edge monomial $y_{M_0}$, the task is to compare the quotient ranks $\dim_{\mathbb{K}}\pi_M(V_T(n))$ for squarefree monomials $y_M\in C_{F,n}$.
In the clique-counting case $T=K_s$ and $F=K_{q+1}$, this comparison becomes the Hilbert-function inequality of \cref{thm:hilbert-turan}.

With the quotient-rank notation in place, the generalized Tur\'{a}n number is obtained by maximizing the surviving $T$-copy space.

\begin{lemma}\label{lem:generalized-cover}
For fixed graphs $T$ and $F$ without isolated vertices, $\ex(n,T,F)=|\mathcal T_{T,n}|-\alpha_T(C_{F,n})$.
Equivalently,
\[
\ex(n,T,F)=\max_{\substack{y_M\in C_{F,n}\\ y_M\text{ squarefree}}}\dim_{\mathbb{K}} \pi_M(V_T(n)).
\]
Moreover, for $2\le s\le q+1$, $\ex(n,K_s,K_{q+1})=\tr_s(n,q)$, where $\tr_s(n,q)$ is the number of $s$-cliques in $T_2(n,q)$.
\end{lemma}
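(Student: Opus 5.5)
The plan is to prove \cref{lem:generalized-cover} in two stages. The first stage is a bijective translation, exactly as in \cref{lem:cover-translation}. The second stage specializes to cliques and invokes \cref{thm:hilbert-turan}.

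For the first stage, I fix a squarefree monomial $y_M$. By the description of cover ideals, $y_M\in C_{F,n}$ holds if and only if $M$ meets every copy of $F$, that is, if and only if the graph $G_M\coloneqq K_n\setminus M$ is $F$-free. The space $V_T(n)$ has the monomial basis $\{y_B\colon B\in\mathcal T_{T,n}\}$, and $\mathfrak m_M$ is a monomial ideal. Hence $V_T(n)\cap\mathfrak m_M$ is spanned by those $y_B$ with $B\cap M\neq\emptyset$, and $\pi_M(V_T(n))$ has as basis the images of the $y_B$ with $B\subseteq G_M$. Since $T$ has no isolated vertices, each copy is determined by its edge set, so $\dim_{\mathbb{K}}\pi_M(V_T(n))$ equals the number of copies of $T$ in $G_M$, and rank--nullity gives $\dim\ker=|\mathcal T_{T,n}|-\dim\pi_M(V_T(n))$. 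Every $F$-free graph $G$ on $[n]$ arises as $G_M$ with $M=K_n\setminus G$. Taking the maximum over $M$ therefore yields both displayed identities at once.

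For the clique case $T=K_s$, $F=K_{q+1}$, I would take an $F$-free graph $G=G_M$ and form the square-zero quadratic monomial quotient $A\coloneqq\mathbb{K}[x_1,\dots,x_n]/I$. Here $I$ is generated by the squares $x_i^2$ together with $x_ix_j$ for every pair $\{i,j\}\notin G$. The standard monomials of degree $d$ of $A$ are then exactly the $d$-cliques of $G$. Consequently $A_{q+1}=0$, because $G$ is $K_{q+1}$-free, and $\dim_{\mathbb{K}}A_s$ is the number of $s$-cliques of $G$, which equals $\dim_{\mathbb{K}}\pi_M(V_{K_s}(n))$ by the first stage. \cref{thm:hilbert-turan} with $r=s$ then gives the upper bound $\tr_s(n,q)$.

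For the lower bound I take $G=T_2(n,q)$. It is $K_{q+1}$-free, since any $q+1$ vertices put two of them in the same part. Its $s$-cliques are the $s$-sets meeting each part at most once, so there are $\tr_s(n,q)$ of them. The constraint $s\le q+1$ only ensures the statement is meaningful; the formula gives $0$ when $s=q+1$, which is consistent with both bounds.

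The main obstacle is purely bookkeeping. One must check that the copy-monomial correspondence is injective, meaning that distinct copies of $T$ give distinct monomials; this is where the absence of isolated vertices is used. One must also check that the quotient built from $G$ has the correct standard monomials in all degrees, not just degree $2$. The latter holds because a squarefree monomial avoids every generator $x_ix_j$ with $\{i,j\}\notin G$ exactly when its support is a clique of $G$.
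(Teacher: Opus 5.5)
Your proposal is correct and follows the paper's proof. Like the paper, you translate $y_M\in C_{F,n}$ into $F$-freeness of $K_n\setminus M$ and count surviving copy monomials with rank--nullity, then build the square-zero vertex-variable quotient whose standard monomials are cliques, apply \cref{thm:hilbert-turan}, and use $T_2(n,q)$ for sharpness. The only cosmetic difference is that you match $\dim_{\mathbb{K}}A_s$ with the quotient rank through the common $s$-clique count, instead of through the explicit basis map $\Phi_s$.
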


\begin{proof}
The squarefree monomials in $C_{F,n}$ are exactly the monomial vertex covers of the family of forbidden-copy monomials $\left\{y_A\colon A\in\mathcal C_{F,n}\right\}$. Equivalently, $y_M\in C_{F,n}$ if and only if every forbidden-copy monomial $y_A$ is killed in the quotient $Q_M=R_n^{(2)}/\mathfrak m_M$.

For such an $M$, the quotient $Q_M$ has the standard monomial basis consisting of monomials in variables $y_e$ with $e\notin M$. Hence a basis element $y_B\in V_T(n)$ has nonzero image under $\pi_M$ precisely when none of its variables is killed, i.e.\ precisely when $y_B\notin \mathfrak m_M$. Distinct nonzero images remain distinct standard monomials in $Q_M$. Therefore $\rho_T(M)=\dim_{\mathbb{K}}\pi_M(V_T(n))$ is exactly the number of $T$-copy monomials surviving in the quotient.

Thus maximizing the number of $T$-copies in an $F$-free graph is the same algebraic problem as maximizing $\rho_T(M)$ over the squarefree monomials $y_M\in C_{F,n}$. Since $\dim_{\mathbb{K}} V_T(n)=|\mathcal T_{T,n}|$ and $\dim_{\mathbb{K}} \pi_M(V_T(n))=\dim_{\mathbb{K}} V_T(n)-\dim_{\mathbb{K}}\ker(\pi_M|_{V_T(n)})$,
the maximum quotient rank equals
\[
|\mathcal T_{T,n}|-
\min_{\substack{y_M\in C_{F,n}\\ y_M\text{ squarefree}}}
\dim_{\mathbb{K}}\ker(\pi_M|_{V_T(n)}).
\]
This is the asserted identity.

For the moreover statement, we recover the clique-counting form of Zykov's theorem.
Fix a squarefree monomial $y_M\in C_{K_{q+1},n}$.
We associate to $M$ the vertex-variable quotient $A_M\coloneqq\mathbb{K}[z_1,\ldots,z_n]/\langle z_i^2,\ z_iz_j\colon y_{ij}\in M\rangle$.
Because $y_M$ lies in the cover ideal of the $K_{q+1}$-copy ideal, every $(q+1)$-vertex set $L\subseteq[n]$ contains a pair $ij\subseteq L$ with $y_{ij}\in M$. Hence $\prod_{i\in L}z_i=0$ in $A_M$ for every $|L|=q+1$, and therefore $(A_M)_{q+1}=0$. By \cref{thm:hilbert-turan}, $\dim_{\mathbb{K}}(A_M)_s\le \tr_s(n,q)$.

It remains to identify this Hilbert function with the quotient rank on the $K_s$-copy space. The linear map
\[
\Phi_s\colon V_{K_s}(n)\longrightarrow \mathbb{K}[z_1,
\ldots,z_n]_s,
\qquad
\Phi_s\bigl(y_{K_W}\bigr)\coloneqq\prod_{i\in W}z_i
\]
identifies the monomial basis indexed by $s$-vertex complete subgraphs with the squarefree degree-$s$ monomials in the vertex variables. After quotienting by $\mathfrak m_M$ on the source and by the quadratic relations defining $A_M$ on the target, the same basis elements are killed: $y_{K_W}$ is killed in $R_n^{(2)}/\mathfrak m_M$ if and only if some $y_{ij}\in M$ with $i,j\in W$, and this is equivalent to $\prod_{i\in W}z_i=0$ in $A_M$. Thus $\dim_{\mathbb{K}}\pi_M(V_{K_s}(n))=\dim_{\mathbb{K}}(A_M)_s\le \tr_s(n,q)$. \cref{lem:generalized-cover} gives $\ex(n,K_s,K_{q+1})\le \tr_s(n,q)$.

For equality, take a balanced partition of $[n]$ into $q$ parts and let $M_0$ be the set of all pairs contained in a single part. Then $y_{M_0}\in C_{K_{q+1},n}$, and the quotient $A_{M_0}$ is the complete balanced $q$-partite square-zero quotient. Its degree-$s$ Hilbert function is $\tr_s(n,q)$, so the upper bound is attained.
\end{proof}

\subsection{The star ideal}\label{sec:star-ideal}

We now apply the cover-ideal translation to the family $\mathcal{K}_{\ell}^{(r)}$. For an $r$-graph $\mathcal{G}\subseteq\binom{[n]}r$, define its \emph{missing-edge monomial} by $m_{\mathcal{G}}\coloneqq\prod_{E\notin \mathcal{G}}y_E$.
Thus $\deg m_{\mathcal{G}}=\binom nr-|\mathcal{G}|$.

For a pair $\{a,b\}\subseteq[n]$, let $\mathcal S^{(r)}_{ab}\coloneqq\left\{E\in\binom{[n]}r\colon\{a,b\}\subseteq E\right\}$ be the \emph{$r$-uniform star} of the pair $\{a,b\}$. Define the \emph{missing star monomial}
\[
u^{(r)}_{ab}\coloneqq\prod_{E\in \mathcal S^{(r)}_{ab}}y_E.
\]
Then $u^{(r)}_{ab}\mid m_{\mathcal{G}}$ if and only if $\codeg_{\mathcal{G}}(a,b)=0$.
Indeed, $u^{(r)}_{ab}$ contains one variable for every possible $r$-edge through the pair $\{a,b\}$, so divisibility by $u^{(r)}_{ab}$ means that all such edges are missing from $\mathcal{G}$.

The next definition imposes this missing-star condition on every possible $\ell$-vertex core.

For $\ell,r\ge 2$, define
\[
J^{(r)}_{n,\ell}\coloneqq\bigcap_{L\in\binom{[n]}{\ell}}
\bigl\langle u^{(r)}_{ab}\colon\{a,b\}\subseteq L\bigr\rangle
\subseteq R_{n,r}.
\]
For a missing-edge monomial $m_{\mathcal{G}}$, membership in $J^{(r)}_{n,\ell}$ therefore says that every $\ell$-set $L$ contains a pair $\{a,b\}$ whose whole $r$-uniform star is missing from $\mathcal{G}$.
This is exactly the local obstruction to using $L$ as the core of a member of $\mathcal{K}_{\ell}^{(r)}$.

The useful point is that, for $\mathcal{K}_{\ell}^{(r)}$, this local missing-star ideal is exactly the cover ideal.

\begin{proposition}\label{prop:collapse}
For $\mathcal{K}_{\ell}^{(r)}$, we have $C_{\mathcal{K}_{\ell}^{(r)},n}=J^{(r)}_{n,\ell}$. Consequently, for every $r$-graph $\mathcal{G}\subseteq\binom{[n]}r$, we have $m_{\mathcal{G}}\in J^{(r)}_{n,\ell}$ if and only if $\mathcal{G}$ is $\mathcal{K}_{\ell}^{(r)}$-free.
\end{proposition}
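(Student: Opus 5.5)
Both ideals are squarefree monomial ideals: $C_{\mathcal{K}_{\ell}^{(r)},n}$ is an intersection of prime monomial ideals, and $J^{(r)}_{n,\ell}$ is an intersection of ideals generated by squarefree monomials. Hence both are spanned by the monomials they contain, and a monomial lies in either ideal if and only if its support does. So it suffices to show that, for every $M\subseteq\binom{[n]}r$, we have $y_M\in C_{\mathcal{K}_{\ell}^{(r)},n}$ if and only if $y_M\in J^{(r)}_{n,\ell}$. By the discussion before \cref{lem:cover-translation}, the left side says that $\mathcal{G}\coloneqq\binom{[n]}r\setminus M$ is $\mathcal{K}_{\ell}^{(r)}$-free. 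For the right side, note that a monomial ideal $\langle u_{ab}\rangle$ contains $y_M$ iff some $u_{ab}$ divides $y_M$, and intersections of monomial ideals are tested monomial by monomial. Thus $y_M\in J^{(r)}_{n,\ell}$ says: every $\ell$-set $L$ contains a pair $\{a,b\}$ with $\mathcal S^{(r)}_{ab}\subseteq M$, i.e.\ $\codeg_{\mathcal{G}}(a,b)=0$. The consequence for $m_{\mathcal{G}}=y_M$ then follows immediately.

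So the proof reduces to the combinatorial equivalence: $\mathcal{G}$ is $\mathcal{K}_{\ell}^{(r)}$-free if and only if every $\ell$-set contains a pair of codegree zero in $\mathcal{G}$. I would prove both directions by contraposition.

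First, suppose some $\ell$-set $L$ has all pairs of positive codegree. For each pair in $L$, choose one edge of $\mathcal{G}$ containing it, and let $H$ be the set of chosen edges. Then $H$ is a simple $r$-graph with at most $\binom{\ell}2$ edges, with core $L$ in which every pair is covered. Hence $H\in\mathcal{K}_{\ell}^{(r)}$ and $H\subseteq\mathcal{G}$, so $\mathcal{G}$ is not free.

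Conversely, suppose $\mathcal{G}$ contains a copy $H$ of a member of $\mathcal{K}_{\ell}^{(r)}$, with core $S$. Every pair in $S$ lies in an edge of $H\subseteq\mathcal{G}$, so $S$ is an $\ell$-set with no codegree-zero pair.

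The only point needing care is the reduction to squarefree monomials. One must check that membership in $J^{(r)}_{n,\ell}$ and in $C_{\mathcal{K}_{\ell}^{(r)},n}$ is decided by supports, and that equality of two monomial ideals follows from their containing the same monomials. These are standard facts for monomial ideals: intersections of monomial ideals are monomial ideals, and radical monomial ideals contain a monomial iff they contain its support. I expect no real obstacle beyond stating this cleanly.
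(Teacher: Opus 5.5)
Your proposal is correct and is essentially the paper's argument. The key step is the same in both: an $\ell$-set whose pairs all have an edge available yields a member of $\mathcal{K}_{\ell}^{(r)}$ by choosing one such edge per pair, and conversely any copy with core $L$ meets every star $\mathcal S^{(r)}_{ab}$ with $\{a,b\}\subseteq L$. The only difference is bookkeeping. The paper proves the per-core identity $C_L=\langle u^{(r)}_{ab}\colon \{a,b\}\subseteq L\rangle$ by testing arbitrary monomials and then intersects over $L$. You instead reduce globally to squarefree monomials $y_M$ and prove directly that $\binom{[n]}{r}\setminus M$ is $\mathcal{K}_{\ell}^{(r)}$-free if and only if every $\ell$-set contains a pair of codegree zero.
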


\begin{proof}
Fix an $\ell$-set $L\subseteq[n]$. Let
\[
C_L\coloneqq
\bigcap_{\substack{H\in \mathcal{K}_{\ell}^{(r)}\\ H\text{ has core }L}}
\langle y_E\colon E\in H\rangle
\]
be the part of the cover ideal arising from forbidden copies with core $L$. We claim that $C_L=\bigl\langle u^{(r)}_{ab}\colon\{a,b\}\subseteq L\bigr\rangle$.
Since both sides are monomial ideals, it suffices to test membership of monomials.
Suppose first that a monomial $m$ is divisible by $u^{(r)}_{ab}$ for some pair $\{a,b\}\subseteq L$. Every $H\in \mathcal{K}_{\ell}^{(r)}$ with core $L$ contains an edge with the pair $\{a,b\}$. The variable corresponding to that edge divides $u^{(r)}_{ab}$, and hence divides $m$. Thus $m\in\langle y_E\colon E\in H\rangle$ for every such $H$, so $m\in C_L$.

Conversely, suppose that $m$ is not divisible by any $u^{(r)}_{ab}$ with $\{a,b\}\subseteq L$. For each pair $\{a,b\}\subseteq L$, choose an $r$-edge $E_{ab}$ containing $\{a,b\}$ such that $y_{E_{ab}}\nmid m$. The $r$-graph with edge set $\left\{E_{ab}\colon\{a,b\}\subseteq L\right\}$ has core $L$ and belongs to $\mathcal{K}_{\ell}^{(r)}$; repeated choices only reduce the number of distinct edges. None of its edge variables divides $m$, so $m$ does not lie in the ideal generated by the variables of this forbidden copy. Hence $m\notin C_L$.

The equality for each fixed core $L$ follows. Intersecting over all $L\in\binom{[n]}{\ell}$ gives $C_{\mathcal{K}_{\ell}^{(r)},n}=J^{(r)}_{n,\ell}$. The final equivalence is the corresponding cover-ideal interpretation for the missing-edge monomial $m_{\mathcal{G}}$.
\end{proof}

After this identification, the remaining task is to compute the initial degree of the missing star ideal.

\begin{theorem}\label{thm:initial-degree}
For all $n,\ell,r\ge 2$, $\alpha\bigl(J^{(r)}_{n,\ell}\bigr)=\binom nr-\tr_r(n,\ell-1)$. Consequently, $\ex(n,\mathcal{K}_{\ell}^{(r)})=\tr_r(n,\ell-1)$.
\end{theorem}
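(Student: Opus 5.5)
By \cref{prop:collapse} and \cref{lem:cover-translation}, $\alpha(J^{(r)}_{n,\ell})=\binom nr-\ex(n,\mathcal{K}_{\ell}^{(r)})$. So the two assertions of the theorem are equivalent, and it suffices to compute the initial degree. Since $J^{(r)}_{n,\ell}$ is a squarefree monomial ideal, its initial degree is attained by a squarefree monomial $y_M$. I will prove a matching upper and lower bound on $\deg y_M$.

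\emph{Upper bound on $\alpha$.} Take a balanced partition $[n]=W_1\sqcup\cdots\sqcup W_{\ell-1}$ and let $M$ be the complement of $T_r(n,\ell-1)$. Any two vertices in the same part have zero codegree in $T_r(n,\ell-1)$, so $u^{(r)}_{ab}\mid y_M$ for every such pair. Any $\ell$-set $L$ contains such a pair by pigeonhole. Hence $y_M\in J^{(r)}_{n,\ell}$, and $\alpha\le\binom nr-\tr_r(n,\ell-1)$.

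\emph{Lower bound on $\alpha$.} Let $y_M\in J^{(r)}_{n,\ell}$ be squarefree, and let $\mathcal{G}$ be the complement of $M$. Let $P$ be the set of pairs $\{a,b\}$ with $u^{(r)}_{ab}\mid y_M$, i.e.\ the pairs with $\codeg_{\mathcal{G}}(a,b)=0$. Following the reduction used for \cref{lem:generalized-cover}, form the vertex-variable quotient
\[
A\coloneqq\mathbb{K}[z_1,\ldots,z_n]/\langle z_i^2,\ z_az_b\colon \{a,b\}\in P\rangle .
\]
Membership of $y_M$ in each component $\langle u^{(r)}_{ab}\colon\{a,b\}\subseteq L\rangle$ means every $\ell$-set $L$ contains a pair of $P$. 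Therefore every squarefree degree-$\ell$ monomial vanishes in $A$, so $A_{\ell}=0$. Next, every edge $E\in\mathcal{G}$ has all its pairs outside $P$, since each such pair has positive codegree witnessed by $E$. Thus $E\mapsto\prod_{i\in E}z_i$ injects $\mathcal{G}$ into the standard monomials of $A_r$. Applying \cref{thm:hilbert-turan} with $q=\ell-1$ gives $|\mathcal{G}|\le\dim_{\mathbb{K}}A_r\le\tr_r(n,\ell-1)$, so $\deg y_M=|M|\ge\binom nr-\tr_r(n,\ell-1)$.

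The main point to check carefully is the lower-bound translation. It relies on the missing-star condition being exactly "codegree zero", which makes each edge of $\mathcal{G}$ a clique in the auxiliary graph $\binom{[n]}2\setminus P$. There the inequality $|\mathcal{G}|\le\dim A_r$ can be strict, but only the inequality is needed. The degenerate cases are harmless. When $r=2$ the statement is Turán's theorem. When $r>\ell-1$, $\tr_r=0$ and the bound says $\mathcal{G}$ is empty, which follows because $A_r=0$ once $A_\ell=0$ and $r\ge\ell$. When $n<\ell$, the intersection defining $J^{(r)}_{n,\ell}$ is empty, so $J^{(r)}_{n,\ell}$ is the whole ring and $\alpha=0$. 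This matches $\tr_r(n,\ell-1)=\binom nr$, since the balanced partition then has singleton parts.
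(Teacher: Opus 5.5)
Your proposal is correct and follows the paper's proof essentially step for step. You use the same balanced-partition monomial for the upper bound. For the lower bound you use the same vertex-variable square-zero quotient with $A_\ell=0$, the same injection of the surviving $r$-sets (edges of $\mathcal{G}$, i.e.\ variables $y_E\nmid y_M$) into standard degree-$r$ monomials, and \cref{thm:hilbert-turan} with $q=\ell-1$. The differences are only cosmetic: you phrase things through the complementary $r$-graph and codegrees rather than divisibility, and you handle the case $n<\ell$ at the end rather than at the start.
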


\begin{proof}
If $n<\ell$, then the intersection defining $J^{(r)}_{n,\ell}$ is empty, so $J^{(r)}_{n,\ell}=R_{n,r}$. Also $\tr_r(n,\ell-1)=\binom nr$, since the balanced $(\ell-1)$-partite $r$-graph can place the $n$ vertices in distinct parts. Thus both sides are zero. Hence assume $n\ge \ell$.

Put $q\coloneqq\ell-1$. Since $J^{(r)}_{n,\ell}$ is a squarefree monomial ideal, its initial degree is attained by a squarefree monomial. Let $m\in J^{(r)}_{n,\ell}$ be squarefree. Define the associated \emph{vertex-variable quotient}
\[
A_m\coloneqq
\mathbb{K}[x_1,\ldots,x_n]\Big/
\bigl\langle x_i^2,\ x_ax_b\text{ whenever }u^{(r)}_{ab}\mid m\bigr\rangle.
\]
This is a square-zero quadratic monomial quotient.
This quotient turns the missing-star information in $m$ into forbidden pairs among the vertex variables: the product $x_ax_b$ is killed exactly when $m$ contains all $r$-edge variables through $\{a,b\}$.
Thus standard degree-$r$ monomials in $A_m$ give $r$-sets whose every pair still has at least one corresponding $r$-edge variable not forced into $m$.

We first show that $(A_m)_\ell=0$. Let $L\in\binom{[n]}{\ell}$. Since $m\in \bigl\langle u^{(r)}_{ab}\colon\{a,b\}\subseteq L\bigr\rangle$, there exists a pair $\{a,b\}\subseteq L$ with $u^{(r)}_{ab}\mid m$. By the definition of $A_m$, this means $x_ax_b=0$ in $A_m$. Hence $\prod_{i\in L}x_i=0$ in $A_m$. Since this holds for every $\ell$-set $L$, we have $(A_m)_\ell=(A_m)_{q+1}=0$. By \cref{thm:hilbert-turan}, $\dim_{\mathbb{K}}(A_m)_r\le \tr_r(n,q)=\tr_r(n,\ell-1)$.

Now consider an $r$-set $E\in\binom{[n]}r$ such that $y_E\nmid m$. We claim that $x_E\coloneqq\prod_{i\in E}x_i$ is nonzero in $(A_m)_r$. If $x_E=0$, then, since the defining ideal of $A_m$ is generated by squares and squarefree quadratic monomials and $E$ is squarefree, there is a pair $\{a,b\}\subseteq E$ such that $x_ax_b=0$ in $A_m$. By construction of $A_m$, this is equivalent to $u^{(r)}_{ab}\mid m$. But $E$ contains $\{a,b\}$, so $y_E$ is one of the factors in $u^{(r)}_{ab}$. Hence $y_E\mid m$, a contradiction.

Thus every variable $y_E$ not dividing $m$ gives a distinct nonzero standard monomial $x_E\in(A_m)_r$. These monomials are linearly independent, so
\[
\#\left\{E\in\tbinom{[n]}r\colon y_E\nmid m\right\}
\le \dim_{\mathbb{K}}(A_m)_r
\le \tr_r(n,\ell-1).
\]
Because $m$ is squarefree, $\deg m=\binom nr-\#\left\{E\in\binom{[n]}r\colon y_E\nmid m\right\}$. Therefore $\deg m\ge \binom nr-\tr_r(n,\ell-1)$. Since $m\in J^{(r)}_{n,\ell}$ was arbitrary among squarefree monomials, this proves $\alpha(J^{(r)}_{n,\ell})\ge \binom nr-\tr_r(n,\ell-1)$.

For the reverse inequality, take a balanced $(\ell-1)$-partition $\mathcal V=(V_1,\ldots,V_{\ell-1})$ of $[n]$.
Let
\[
\mathcal N_{\mathcal V}^{(r)}
\coloneqq
\left\{E\in\tbinom{[n]}r\colon E\text{ contains two vertices from some }V_i\right\}
\]
be the set of \emph{non-transversal $r$-sets} with respect to $\mathcal V$, and set $m_0\coloneqq\prod_{E\in \mathcal N_{\mathcal V}^{(r)}}y_E$. The \emph{transversal $r$-sets} are exactly the edges of $T_r(n,\ell-1)$, and hence $\deg m_0=\binom nr-\tr_r(n,\ell-1)$.
We verify that $m_0\in J^{(r)}_{n,\ell}$. Let $L\in\binom{[n]}{\ell}$. Since $L$ has $\ell$ vertices and there are only $\ell-1$ parts, two vertices $a,b\in L$ lie in the same part. Every $r$-set $E$ containing $\{a,b\}$ is non-transversal, so $y_E\mid m_0$. Hence $u^{(r)}_{ab}\mid m_0$, and therefore $m_0\in \bigl\langle u^{(r)}_{ab}\colon\{a,b\}\subseteq L\bigr\rangle$. Since $L$ was arbitrary, $m_0\in J^{(r)}_{n,\ell}$. Thus $\alpha(J^{(r)}_{n,\ell})\le \deg m_0=\binom nr-\tr_r(n,\ell-1)$.
Combining the two inequalities proves the formula for $\alpha(J^{(r)}_{n,\ell})$.

By \cref{lem:cover-translation,prop:collapse},
\begin{align*}
\binom nr-\ex(n,\mathcal{K}_{\ell}^{(r)})
=\alpha(C_{\mathcal{K}_{\ell}^{(r)},n}) 
=\alpha(J^{(r)}_{n,\ell}) 
=\binom nr-\tr_r(n,\ell-1).
\end{align*}
Canceling $\binom nr$ gives the asserted consequence.
\end{proof}

\section*{Acknowledgements}

H.L. was supported by the National Natural Science Foundation of China (12501487), the China Scholarship Council, and the Institute for Basic Science (IBS-R029-C4).
X.L. was supported by the Excellent Young Talents Program (Overseas) of the National Natural Science Foundation of China.

\section*{Declaration on the use of AI}

The authors used generative AI tools to assist in discussing proof strategies, checking proofs, and improving exposition. All mathematical arguments, results, and conclusions were reviewed and verified by the authors.

\bibliographystyle{abbrv}
\bibliography{references}

\end{document}